\documentclass[12pt]{amsart}
\usepackage{amsmath}
\usepackage{amsfonts}
\usepackage{amssymb}
\usepackage[]{amsmath, amsthm, amsfonts, amssymb, epsfig}
\textheight222mm
\textwidth144mm
\topmargin-12mm
\oddsidemargin7.5mm
\parskip4pt plus2pt minus2pt

\numberwithin{equation}{section}

\begin{document}
\title[Khavinson-Shapiro Conjecture for the Bergman Projection]{The Khavinson-Shapiro Conjecture for the Bergman Projection in one and Several Complex Variables}

\author{Alan R. Legg}

\address{Mathematics Department, Purdue University, West Lafayette,
IN  47907}

\email{arlegg@purdue.edu}
\thanks{Research supported by the NSF Analysis and Cyber-enabled
Discovery and Innovation programs, grant DMS~1001701}
\subjclass{}
\keywords{}

\begin{abstract}
We reveal a complex analogue to a result about polynomial solutions to the Dirichlet Problem on ellipsoids in $\mathbb{R}^n$ by showing that the Bergman projection on any ellipsoid in $\mathbb{C}^n$ is such that the projection of any polynomial function of degree at most $N$ is a holomorphic polynomial function of degree at most $N$.  The discussion is motivated by a connection between the Bergman projection and the Khavinson-Shapiro conjecture in $\mathbb{C}$. We also relate the Khavinson-Shapiro conjecture to polyharmonic Bergman projections in $\mathbb{R}^n$ by showing that these projections take polynomials to polynomials on ellipsoids.
\end{abstract}

\maketitle

\theoremstyle{plain}

\newtheorem {thm}{Theorem}[section]
\newtheorem {lem}[thm]{Lemma}
\newtheorem {prop} [thm] {Proposition}
\section{Introduction and Notation}

An intriguing connection between the Dirichlet problem and the Bergman projection can be found via the Khavinson-Shapiro conjecture, which in one formulation posits that ellipsoids are the only domains on which the Dirichlet problem solution operator for the Laplacian takes polynomial boundary data to polynomials (cf. Sections 2 and 5 of \cite{KS}).  If we modify the Khavinson-Shapiro conjecture by replacing the Dirichlet problem solution operator with the Bergman projection, then in the special case of smooth bounded planar domains, we will see below that we actually obtain a statement equivalent to the original Khavinson-Shapiro conjecture.  This observation is the starting point here for a consideration of the Bergman projections of polynomial functions on ellipsoids in more than one complex variable.  
  
   For the case of the Laplacian on real space, it is a fact that the Dirichlet problem solution operator on an ellipsoid takes polynomial boundary data into harmonic polynomials.  That is to say, whenever the boundary values of a polynomial are given on an ellipsoid, it follows that the harmonic function on the ellipsoid which attains the same boundary values is also a polynomial.  Furthermore, the degree of this harmonic polynomial does not exceed the degree of the polynomial whose boundary data were given.  The result can be obtained very elegantly by the use of a linear map from the set of polynomials to itself (the so-called ``Fischer Map"). For a good treatment of the details, see Proposition 1 of \cite{KL}; another exposition along the same lines can be found in Sections 1 and 2 of \cite{Baker}.  
  
  Employing an argument in the same spirit, we establish in Section 3 an analytic analogue for ellipsoids in $\mathbb{R}^{2n} \sim \mathbb{C}^n$, showing that the Bergman projection on ellipsoids maps polynomials to polynomials. Even more specifically, we show that the Bergman projection of any polynomial function on an ellipsoid is a holomorphic polynomial of equal or lesser degree. 
  
  Further background for questions related to the Khavinson-Shapiro conjecture can be found in the article \cite{KL1}. For more works pertaining to the use of Fischer maps and related machinery in partial differential equations, we direct the reader to the papers \cite{Sh1}, \cite{LR}, \cite{Re}, \cite{R}.

   Recall for the sake of precision  that given a domain $\Omega \subset \mathbb{C}^n$, the Bergman projection $B: \thinspace L^2(\Omega) \rightarrow H^2(\Omega)$ on $\Omega$ is the orthogonal projection from $L^2(\Omega)$ onto its subspace $H^2(\Omega)$ consisting of holomorphic functions which are square-integrable with respect to Lebesgue measure. Here we are employing the usual inner product on $L^2(\Omega)$; namely, given $f,g \in L^2(\Omega)$, their inner product is $\langle f, g \rangle = \int_{\Omega}^{}f\bar{g}dV$, where $dV$ is Lebesgue measure.

As a matter of notation, let $z=(z_1, z_2,  \dots, z_n)$ denote the coordinates of $\mathbb{C}^n$, and let $x_j=Re( z_j)$ and $\thinspace y_j=Im( z_j),\thinspace j=1,\thinspace 2,\dots,n$ denote the real coordinates on $\Omega$. We further say $x=(x_1, x_2, \dots, x_n)$ and $ \thinspace y=(y_1, y_2, \dots, y_n)$, and we let $\alpha, \beta, \gamma$ stand for $n$-dimensional multi-indices.  Then, using the usual multi-index notation, we define for each nonnegative integer $N$ the following sets of functions on $\Omega$:
\begin{equation}
\label{Pdef}
  P_N= \{\sum_{|\alpha|+|\beta| \leq N}c_{\alpha,\beta}x^\alpha y^{\beta}\thinspace :\thinspace c_{\alpha, \beta} \in \mathbb{C} \},
  \end{equation}
the set of (not-necessarily-holomorphic) complex-valued polynomial functions of degree at most $N$, and
  \begin{equation}
  \label{Hdef}
   HP_N= \{\sum_{|\gamma|\leq N}^{} d_{\gamma}z^{\gamma} \thinspace : \thinspace d_{\gamma} \in \mathbb{C} \},
   \end{equation}
the set of holomorphic polynomial functions of degree at most N. The content of our main result, then, is that on any ellipsoid, $B(P_N)=HP_N$ for each nonnegative integer $N$.

The close similarity to the case of the Dirichlet problem solution operator on an ellipsoid naturally brings us back to the consideration of a `Khavinson-Shapiro'-type conjecture for the Bergman projection in several dimensions; i.e., we may ask whether multi-dimensional ellipsoids are at all characterized by the property that the Bergman projection maps polynomials to polynomials.  In Section 4, we work toward a generally negative answer to the question, exhibiting in this case non-ellipsoidal domains on which the Bergman projection maps polynomials to polynomials. 

Finally, we return in Section 5 to the linear-algebra-style proof used in Section 3 to show that the polyharmonic Bergman projections take polynomials to polynomials on ellipsoids in real space.  This serves to open the possibility of a hierarchy of Khavinson-Shapiro conjectures.

\section{The Bergman Projection and the Khavinson-Shapiro Conjecture in $\mathbb{C}$}

As motivation for considering the Bergman projection as it acts on polynomials in ellipsoidal domains, we first consider the case of the plane.  In this case, it is true that the Bergman projection takes polynomials to polynomials on ellipses, but by simple calculations we show something a bit stronger, which is related to the Khavinson-Shapiro conjecture.  

It turns out, as presented in the next proposition, that for smooth bounded domains in the plane, the Dirichlet problem solution operator takes polynomials to polynomials if and only if the Bergman projection takes polynomials to polynomials.  Thus the Khavinson-Shapiro conjecture is equivalent in this case to the analogous formulation involving the Bergman projection instead of the Dirichlet problem solution operator.  To see this requires the fact that holomorphy and harmonicity in the plane are related by differentiation (if $f$ is harmonic, then $\frac{\partial f}{\partial z}$ is holomorphic).

\begin{prop}
Let $\Omega \subset \mathbb{C}$ be a $\mathcal{C}^{\infty}$-smooth bounded domain.  Then the Bergman projection of $\Omega$ maps polynomials to polynomials if and only if the Dirichlet problem solution operator takes polynomial boundary data to polynomials.  
\end{prop}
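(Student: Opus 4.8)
The plan is to produce a single identity that links the two operators and then read off both implications from it. Write $D$ for the Dirichlet solution operator and $B$ for the Bergman projection, and let $p$ be a polynomial. Solving the Dirichlet problem with data $p|_{\partial\Omega}$ gives the harmonic function $u = D(p|_{\partial\Omega})$; since $\partial\Omega$ is $\mathcal{C}^\infty$-smooth and the data are smooth, elliptic regularity yields $u \in \mathcal{C}^\infty(\overline\Omega)$. Because $u$ is harmonic, $\Delta u = 4\,\frac{\partial}{\partial \bar z}\frac{\partial u}{\partial z}=0$, so $\frac{\partial u}{\partial z}$ is holomorphic and, being smooth up to the boundary on a bounded domain, lies in $H^2(\Omega)$. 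The key claim I would establish is
\[
B\!\left(\tfrac{\partial p}{\partial z}\right)=\tfrac{\partial u}{\partial z}=\tfrac{\partial}{\partial z}\,D(p|_{\partial\Omega}).
\]

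To prove this identity, the central point is that $\frac{\partial}{\partial z}(p-u)$ is orthogonal to $H^2(\Omega)$. Here I would invoke the complex form of Green's theorem: for $\Phi\in\mathcal{C}^1(\overline\Omega)$ and $h$ holomorphic and smooth up to the boundary, $\frac{\partial}{\partial z}(\Phi\bar h)=\left(\frac{\partial\Phi}{\partial z}\right)\bar h$ since $\bar h$ is antiholomorphic, whence $\int_\Omega \frac{\partial\Phi}{\partial z}\,\bar h\,dV=-\frac{1}{2i}\int_{\partial\Omega}\Phi\,\bar h\,d\bar z$. Taking $\Phi=p-u$, which vanishes on $\partial\Omega$, kills the boundary integral, so $\langle \frac{\partial}{\partial z}(p-u),h\rangle=0$. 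Since holomorphic functions smooth up to the boundary are dense in $H^2(\Omega)$ for a smooth bounded domain, $\frac{\partial}{\partial z}(p-u)\perp H^2(\Omega)$; as $\frac{\partial u}{\partial z}\in H^2(\Omega)$, this gives exactly $B(\frac{\partial p}{\partial z})=\frac{\partial u}{\partial z}$.

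Granting the identity, both directions follow quickly. For ``$D\Rightarrow B$'', every polynomial $q$ can be written $q=\frac{\partial p}{\partial z}$ for some polynomial $p$ (antidifferentiate each monomial in $z$); if $D$ sends polynomial data to polynomials then $u=D(p|_{\partial\Omega})$ is a polynomial, so $B(q)=\frac{\partial u}{\partial z}$ is a polynomial. For the converse I would run the conjugate construction in parallel: the analogous identity for the projection $\bar B$ onto antiholomorphic functions gives $\frac{\partial u}{\partial\bar z}=\bar B(\frac{\partial p}{\partial\bar z})$, and the relation $\bar B(f)=\overline{B(\bar f)}$ shows $\bar B$ preserves polynomials whenever $B$ does. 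Thus if $B$ preserves polynomials then both $\frac{\partial u}{\partial z}$ and $\frac{\partial u}{\partial\bar z}$ are polynomials; equivalently $\frac{\partial u}{\partial x}$ and $\frac{\partial u}{\partial y}$ are, and integrating over the connected domain forces $u$ itself to be a polynomial, i.e.\ $D$ preserves polynomial boundary data.

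The routine parts are the two Green's-theorem integrations by parts and the termwise antidifferentiation. The step I expect to require the most care is the density assertion---that holomorphic functions smooth up to $\partial\Omega$ exhaust $H^2(\Omega)$---which is precisely what upgrades the boundary-integral computation from a dense test class to orthogonality against all of $H^2(\Omega)$; this is where the $\mathcal{C}^\infty$-smoothness and boundedness of $\Omega$ are genuinely used, alongside the elliptic regularity guaranteeing $u\in\mathcal{C}^\infty(\overline\Omega)$.
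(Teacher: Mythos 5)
Your proof is correct, and it runs on the same underlying mechanism as the paper's---the bridge $\partial/\partial z$ between harmonic and holomorphic functions, plus the fact that $\frac{\partial}{\partial z}$ of a boundary-smooth function vanishing on $\partial\Omega$ is orthogonal to the Bergman space---but it differs at the key step. The paper does not prove that orthogonality: it invokes Havin's Lemma (cited from Shapiro's book) to get the decomposition $\frac{\partial Q}{\partial z}=B\bigl(\frac{\partial Q}{\partial z}\bigr)+\frac{\partial\varphi}{\partial z}$ with $\varphi$ smooth up to $\partial\Omega$ and vanishing there, then observes inside each implication that $Q-\varphi$ is exactly the harmonic extension of $Q|_{\partial\Omega}$, and closes the forward direction with a reality argument ($P+\bar H=\bar P+H$ forces $P-H$ constant). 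You instead isolate the intertwining identity $B\bigl(\frac{\partial p}{\partial z}\bigr)=\frac{\partial}{\partial z}D(p|_{\partial\Omega})$ up front, prove the orthogonality by hand via the complex Green's theorem applied to $p-u$, and use the relation $\bar B f=\overline{B(\bar f)}$ in place of the paper's reality trick; this packaging makes both implications essentially one-liners and handles complex-valued data without splitting into real and imaginary parts. The trade-off sits exactly where you flagged it: the density of boundary-smooth holomorphic functions in $H^2(\Omega)$ is a genuine theorem, not a routine fact---for smooth bounded planar domains it follows from Bell's Condition~R (regularity of the Bergman projection) or from classical $L^2$ rational approximation results---so you have in effect exchanged the paper's citation of Havin's Lemma for a citation of comparable depth; indeed, Havin's Lemma is more or less the assertion that your Green's-theorem computation, suitably completed, describes all of $H^2(\Omega)^{\perp}$. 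If you want to avoid that citation, you can get orthogonality against every $h\in H^2(\Omega)$ directly by an exhaustion argument: apply Stokes on $\Omega_\epsilon=\{\rho<-\epsilon\}$ for a smooth defining function $\rho$, note that $|p-u|=O(\epsilon)$ on $\partial\Omega_\epsilon$, and use the integrability of $\epsilon\mapsto\int_{\partial\Omega_\epsilon}|h|\,ds$ (coarea formula) to pass to the limit along a sequence $\epsilon_n\to 0$. Finally, to make the last step airtight, conclude that $u$ is a polynomial by antidifferentiating its two polynomial Wirtinger derivatives into a single polynomial $v$ with $\nabla v=\nabla u$ and using only connectedness of $\Omega$ to get $u-v$ constant; this is what "integrating" should mean here, since $\Omega$ need not be convex or simply connected.
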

\begin{proof}
First assume that the Bergman projection maps polynomials to polynomials, and let $Q(z,\thinspace \bar{z})$ be a real-valued polynomial function on $\Omega$.  By Havin's Lemma (see, e.g., pages 26 and 82 of \cite{Sh}), we then have the orthogonal decomposition
$\frac{\partial Q}{\partial z}=p(z) + \frac{\partial \varphi}{\partial z}$, where $p$ is the Bergman projection of $\frac{\partial Q}{\partial z}$, and $\varphi$ is $\mathcal{C}^{\infty}$-smooth up to the boundary of $\Omega$, and vanishes on the boundary of $\Omega$.  By hypothesis, $p$ is a holomorphic polynomial; and by formal antidifferentiation let $P$ be a holomorphic polynomial such that $P'=p$.  Then we have that $\frac{\partial}{\partial z}(Q-P-\varphi)=0$.  Hence the function being differentiated on the left is anti-holomorphic, say $Q-P-\varphi = \bar{H}$, where $H \in H^2(\Omega)$. But now notice that $Q-\varphi$ is harmonic and equal to $Q$ on the boundary, and so is the solution to the Dirichlet problem with boundary data $Q$.  Since $Q$ is real-valued, so is the harmonic extension of its boundary values, and so $P+\bar{H}=\bar{P}+H$.  But this means that $P-H=\bar{P}-\bar{H}$, and so $P-H$ must be constant (it is both holomorphic and antiholomorphic).  Hence $H$ is a polynomial.  But this means that the solution to the Dirichlet problem with boundary data $Q$, is a polynomial.  Now by linearity and breaking into real and imaginary parts, we see that the Dirichlet solution is polynomial for any complex-valued polynomial boundary data.

Conversely, assume that the Dirichlet solution is polynomial whenever the boundary data of a polynomial is given on $bd \Omega$.  Then, let $q(z,\thinspace \bar{z})$ be any polynomial.  By formal antidifferentiation in $z$, let $Q$ be a polynomial function such that $\frac{\partial Q}{\partial z}=q$.  Let $p$ be the Bergman projection of $q$.  Just as above, there exists $\varphi$ smooth up to the boundary and vanishing on $bd \Omega$ such that we have the orthogonal decomposition $\frac{\partial Q}{\partial z}=p + \frac{\partial \varphi}{\partial z}$.  Differentiate this equation with respect to $\bar{z}$ to conclude that $\Delta Q = \Delta \varphi$.  Hence $Q-\varphi$ is harmonic, and has the same boundary values as $Q$.  Hence it is the Dirichlet solution for boundary data $Q$, and so by hypothesis $Q-\varphi$ is a polynomial, and so $\frac{\partial \varphi}{\partial z}$ is also a polynomial.  But now, returning to the relation $q=p+\frac{\partial \varphi}{\partial z}$, we have that $p$ is in fact a polynomial.
\end{proof}

Hence for bounded smooth planar domains, the Khavinson-Shapiro conjecture can be rephrased to the effect that ellipses should be the only smooth bounded planar domains on which the Bergman projection maps polynomials to polynomials.  From here on, we will investigate the situation in more than one variable. 

  We emphasize that, while we were able to employ the fact that holomorphy and harmonicity are related simply by a differentiation in $\mathbb{C}$,  this is not so in more than one complex variable.  For this reason, we of course expect that any relationship between the behaviors of the Dirichlet solution operator and the Bergman projection in several variables will be more complicated than in the planar case.  Nevertheless, a strong similarity will be found. We will find that the Bergman projection continues to take polynomials to polynomials on ellipsoidal domains, but that other classes of domains have the same behavior.

\section{The Bergman Projection of Polynomials on Ellipsoids }

To proceed with a consideration of matters in more than one dimension, we will need to have a few pertinent facts at our disposal, which are collected here for reference. Note particularly that our setting will be applicable to all ellipsoids in $\mathbb{C}^n$, not just complex ellipsoids.
 
Any ellipsoid is by definition a quadric; that is, given an ellipsoid $\Omega \subset \mathbb{C}^n \sim \mathbb{R}^{2n}$, there exists a polynomial function $r(x_1,x_2, \dots, x_n, y_1, y_2, \dots, y_n) \thinspace$ on $\mathbb{C}^n$ such that the degree of $r$ is equal to $2$, $r$ vanishes on the boundary of $\Omega$, and
\begin{equation}
\label{quadric}
\Omega= \{z \in \mathbb{C}^n \thinspace : \thinspace r(x_1, x_2, \dots, x_n, y_1, y_2, \dots, y_n)< 0 \}.
\end{equation}

In addition, letting $H^2(\Omega)^\perp$ be the orthogonal subspace to $H^2(\Omega)$ in $L^2(\Omega)$, we have the following: if $\omega$ is any smooth $(0,1)$-form on $\Omega$ which extends smoothly to the boundary of $\Omega$ and vanishes on the boundary of $\Omega$, then 
\begin{equation}
\label{perp}
\vartheta \omega \in H^2(\Omega)^\perp,
\end{equation}
where $\vartheta$ is the formal adjoint to the $\overline{\partial}$ operator (see Section 3 of \cite{Bell1}).

 Finally, we point out that although $\vartheta$ and $\overline{\partial}$ are merely formal adjoints, they act as true Hilbert space adjoints for certain pairs of forms or functions.  Among these cases is that of the inner product of two functions, each of which is smooth up to the boundary of $\Omega$, and one of which is of the form $\vartheta \omega$, where $\omega$ is a smooth $(0,1)$-form which extends smoothly to the boundary of $\Omega$ and vanishes on the boundary of $\Omega$. To be precise, if the other function in the inner product is $f$, then in this case we may write 
 \begin{equation}
 \label{adjoint}
 \langle \vartheta \omega, f \rangle = \langle \omega, \overline{\partial}f \rangle,
 \end{equation}
  where we use $\langle \cdot \thinspace, \cdot \rangle$ to denote both the usual $L^2$ inner product on functions, and the $L^2$ inner product on $(0,1)$-forms, defined as the sum of the inner products of the respective component functions of the forms involved. A detailed account of the adjointness properties of $\vartheta$ and $\overline{\partial}$ on smooth bounded domains can be found in \cite{FK}.

We are now ready to state our main result:

\begin{thm}
\label{main}
 Suppose that $\Omega \subset \mathbb{C}^n$ is an ellipsoid and, as in (\ref{Pdef}) and (\ref{Hdef}), let $P_N$ and $HP_N$ be, respectively, the space of complex-valued polynomial functions on $\Omega$ of degree at most N, and the space of holomorphic polynomial functions of degree at most N.  Denote by $B$ the Bergman projection on $\Omega$. Then for each nonnegative integer $N$, $B(P_N)=HP_N$.
 \end{thm}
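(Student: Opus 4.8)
The plan is to reduce the theorem to a purely algebraic decomposition of $P_N$ and then to establish that decomposition by a Fischer-type linear-algebra argument adapted to $\vartheta$. First I observe that the inclusion $HP_N \subseteq B(P_N)$ is immediate: a holomorphic polynomial $h$ of degree at most $N$ lies in $H^2(\Omega) \cap P_N$, so $B(h) = h$. Everything therefore rests on the reverse inclusion, i.e. on showing $B(p) \in HP_N$ for each $p \in P_N$. To this end I would try to write
\begin{equation*}
p = h + \vartheta(r\eta),
\end{equation*}
where $r$ is the defining quadratic from (\ref{quadric}), $h \in HP_N$, and $\eta$ is a $(0,1)$-form whose coefficients are polynomials of degree at most $N-1$. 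Because $r$ vanishes on $bd\,\Omega$, the form $r\eta$ is smooth up to the boundary and vanishes there, so (\ref{perp}) gives $\vartheta(r\eta) \in H^2(\Omega)^\perp$; since $h \in H^2(\Omega)$, this is exactly the orthogonal decomposition of $p$, whence $B(p) = h \in HP_N$. The degree bound is then automatic from the construction.

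To produce such a decomposition I would study the linear map
\begin{equation*}
T(h, \eta_1, \dots, \eta_n) = h - \sum_{j=1}^{n} \frac{\partial}{\partial z_j}(r\,\eta_j),
\end{equation*}
from $HP_N \oplus (P_{N-1})^n$ into $P_N$ (using $\vartheta(r\eta) = -\sum_j \partial_{z_j}(r\eta_j)$), and show it is surjective. Since the degree-raising behavior of $T$ is governed by the homogeneous quadratic part $r_2$ of $r$, I would pass to the associated graded map $T_2$ on homogeneous polynomials, $(h,\eta) \mapsto h - \sum_j \partial_{z_j}(r_2\, \eta_j)$, and prove surjectivity of $T_2$ in each degree. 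An induction on $N$ that peels off the top homogeneous part then upgrades this to surjectivity of $T$ with the stated degree control, so that indeed $h \in HP_N$ and $\deg \eta_j \le N-1$. A quick dimension count (which is an exact match when $n=1$) makes this surjectivity at least plausible.

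The heart of the matter, and the step I expect to be the main obstacle, is the surjectivity of the homogeneous map $T_2$. I would equip the polynomials with a Fischer inner product, under which multiplication by $z_j$ is adjoint to $\partial_{z_j}$ and multiplication by $r_2$ is adjoint to the constant-coefficient operator $r_2(\partial)$. Surjectivity of $T_2$ is then equivalent to injectivity of $T_2^*$, that is, to showing that a homogeneous polynomial $f$ which is Fischer-orthogonal to all holomorphic polynomials and satisfies $r_2(\partial)(z_j f) = 0$ for every $j$ must vanish. This is precisely where the ellipsoid hypothesis enters: since $\Omega$ is an ellipsoid, $r_2$ is a positive-definite real quadratic form, so $r_2(\partial)$ is elliptic (a Laplacian after a linear change of variables), and it is this nondegeneracy that should force $f = 0$. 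Verifying this kernel statement, together with checking that the formal-adjoint computations are legitimate in the Fischer pairing and mesh correctly with the genuine Hilbert-space adjointness recorded in (\ref{adjoint}), is the technical core on which the whole argument turns.
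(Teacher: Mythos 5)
Your reduction is set up exactly as in the paper: the inclusion $HP_N \subseteq B(P_N)$ is indeed immediate, and once one has a decomposition $p = h + \vartheta(r\eta)$ with $h \in HP_N$ and $r\eta$ vanishing on $bd\,\Omega$, property (\ref{perp}) makes it an orthogonal decomposition and $B(p)=h$ follows. The genuine gap is that you never establish that such a decomposition exists. You reduce its existence to surjectivity of the graded map $T_2$, then via Fischer duality to a kernel statement --- that a homogeneous $f$ which is Fischer-orthogonal to all holomorphic polynomials and satisfies $r_2(\partial)(z_j f)=0$ for every $j$ must vanish --- and you acknowledge that verifying this is ``the technical core on which the whole argument turns.'' That statement is not an off-the-shelf Fischer decomposition: it concerns the ideal generated by $\bar z_1,\dots,\bar z_n$ together with the family of operators $f \mapsto r_2(\partial)(z_j f)$, and nothing in the proposal proves it, even for the ball. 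Note also that the duality route is forced on you by your choice of map: $T$ goes from $HP_N \oplus (P_{N-1})^n$ to $P_N$, and for $n>1$ the source generally has strictly larger dimension than the target (your ``exact match'' only happens when $n=1$), so surjectivity cannot be bought from injectivity by dimension counting. As it stands, the proposal proves the easy inclusion and defers the entire difficulty to an unproven algebraic lemma.

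The paper closes precisely this gap with a different device, which you may find instructive. Instead of allowing arbitrary polynomial forms $\eta$, it takes $\eta = \overline{\partial} q$ with $q \in P_N$, so that $\varphi(p) = \vartheta r \overline{\partial} p$ is an endomorphism of $P_N$ that annihilates $HP_N$ and hence descends to the finite-dimensional quotient $P_N/HP_N$. Being a square map, injectivity suffices, and injectivity is proved not algebraically but with the honest $L^2(\Omega)$ inner product: if $\vartheta r \overline{\partial} p \in HP_N$, then by (\ref{perp}) it lies in $H^2(\Omega)^\perp \cap H^2(\Omega) = \{0\}$, and then $0 = \langle -\vartheta r \overline{\partial} p,\, p \rangle = \langle -r\overline{\partial} p,\, \overline{\partial} p \rangle$ by (\ref{adjoint}), which forces $\overline{\partial} p \equiv 0$ because $-r > 0$ on $\Omega$. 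Surjectivity of the quotient map, hence the decomposition $P = \vartheta r \overline{\partial} Q + H$, then comes for free from finite dimensionality. In other words, the positivity you hoped to extract from ellipticity of $r_2(\partial)$ through a Fischer pairing is obtained directly from the positivity of $-r$ on $\Omega$ in the genuine Hilbert-space pairing, with no graded induction and no Fischer-adjoint kernel lemma. If you want to salvage your route, you would need to supply a proof of the kernel statement (it is true, since it follows a posteriori from the theorem, but proving it directly appears to be at least as hard as the theorem itself); the quotient trick is the way to avoid that work.
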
 

\begin{proof} The inclusion $HP_N \subset B(P_N)$ is clear, since $HP_N$ is a subset of $P_N$ which is invariate under $B$. 

 Considering $P_N$ and $HP_N$ as finite-dimensional complex vector spaces, and noting that $HP_N$ is a subspace of $P_N$, form the quotient vector space $P_N/HP_N$.  The idea of the proof of the inclusion $B(P_N) \subset HP_N$ will be to exploit a certain vector space isomorphism of $P_N/HP_N$ with itself to obtain an orthogonal decomposition for elements of $P_N$.  

To this end, let $r$ be a degree-2 defining polynomial for $\Omega$ as in (\ref{quadric}), so that $r < 0$ on $\Omega$ and $r|_{\partial \Omega} = 0$, and define the map $\varphi : P_N \rightarrow P_N$ according to the formula
  \[\varphi (p) = \vartheta r \overline{ \partial}p \quad \text{ for each $p \in P_N$}.  \] 
  That $\varphi$ does in fact preserve degree is a consequence of the fact that \[ \vartheta r \overline{ \partial}p = -\sum _{j=1}^n \frac{\partial}{\partial z_j}(r \frac{\partial p}{\partial \bar{z}_j}), \] and each term of this sum has degree at most $N$; for $p$ itself has degree at most $N$, and each differentiation reduces the degree by at least $1$, while multiplying by $r$ increases the degree by at most $2$. It is clear, moreover, that $\varphi$ is complex-linear; and if it happens that $p \in HP_N$, then $\overline{\partial} p = 0$, so that $\varphi (p) =0$.
   Hence $\varphi$ descends to a linear mapping $\tilde{\varphi} : P_N/HP_N \rightarrow P_N/HP_N $ according to $\tilde{\varphi}([p])=[\varphi(p)]\quad \text{for each}\quad [p] \in P_N/HP_N,$ where $[\thinspace \cdot \thinspace ]$ denotes equivalence class.  In fact, $\tilde{\varphi}$ is injective, as we now show. 
  
   Assume for the moment that $\tilde{\varphi} ([p])=[0]$.  In this case $\varphi (p)$ is equivalent to $0$ modulo $HP_N$, and so there exists $h \in HP_N$ such that $\vartheta r \overline{\partial}p=h.$
  However, $r \overline{\partial} p$ is a $(0,1)$-form on $\Omega$, smooth up to boundary of $\Omega$, which vanishes on the boundary of $\Omega$;  consequently, (\ref{perp}) gives that $\vartheta r \overline{\partial}p \in \mathnormal{H}^2(\Omega)^{\perp}$.  And now, since $h \in \mathnormal{H}^2(\Omega), $ we see that $\vartheta r \overline{\partial}p = 0$, and we may calculate: \[ 0=\langle - \vartheta r \overline{\partial}p, p \rangle \\ =\langle -r \overline{\partial}p, \overline{\partial}p \rangle \]  where in the second equality the use of the adjointess of $\vartheta$ and $\overline{\partial}$ is justified since $r \overline{\partial}p$ vanishes on the boundary (cf (\ref{adjoint})) above). Owing to the fact that $-r >0$ on $\Omega$ , we have demonstrated that the weighted $L^2$ norm of the $(0,1)$-form $\overline{\partial}p$ against a positive measure on $\Omega$ arising from a smooth function is $0$, which in turn implies that $\overline{\partial}p \equiv 0$, so $p$ is holomorphic and $[p]=[0]$.  So indeed $\tilde{\varphi}$ is injective.
  
  Now, $\tilde{\varphi}$ must also be surjective, being an injective linear map from a finite-dimensional vector space into a vector space of equal dimension. The surjectivity of $\tilde{\varphi}$ will provide us with the orthogonal decomposition we require to identify the Bergman projections of the elements of $P_N$.  
  
  Given any polynomial function $P \in P_N$, there exists a polynomial function $Q \in P_N$ such that $[P]=\tilde{\varphi}([Q])$; or, what is the same, there must exist a holomorphic polynomial function $H \in HP_N$ such that
  \begin{equation}
  \label{decomp}
  P=\vartheta r \overline{\partial}Q + H.
  \end{equation}
  Notice now, though, that $\vartheta r \overline{\partial}Q \in \mathnormal{H}^2(\Omega)^\perp$ by (\ref{perp}), and since $H \in \mathnormal{H}^2(\Omega),$ (\ref{decomp}) is in fact an orthogonal decomposition of $P$, and so we must have that $BP = H \in HP_N$. Thus $B(P_N) \subset HP_N$, and we are finished.
  \end{proof}
  
  \section{Other Domains on which the Bergman Projection Maps Polynomials to Polynomials}
  In response to the Khavinson-Shapiro-type question of how well ellipsoids may be characterized by the property that polynomials are mapped to polynomials under the Bergman projection, we provide here examples of other domains exhibiting the same property.
  
  \subsection{Bounded Circular Domains} 
  
  Let $R \subset \mathbb{C}^n$ be a bounded circular domain containing the origin, and let  $K(z, w)$ be the Bergman kernel function of $R$.  For convenience, for each multi-index $\alpha$ define $K_{0}^{\alpha}(z)=\frac{\partial ^\alpha K(z,w)}{\partial \bar{w}^\alpha}|_{w=0}$. As discussed in \cite{Bell2}, the function $K_{0}^{\alpha}$ is such that for each $f \in H^2(R)$,
  \begin{equation}
  \label{diffrepro} 
   \langle f, K_{0}^{\alpha}  \rangle = \frac{\partial ^\alpha f}{\partial z^\alpha}(0).
  \end{equation}
  Note that $K_0^\alpha$ is the unique such function in $H^2(R)$, being the integral kernel function guaranteed by the Riesz representation theorem for point evaluation of the $\alpha$- derivative at zero for functions in $H^2(R)$. 
  
  Since $R$ is a bounded circular domain containing $0$, it follows from \cite{Bell2} that the linear span of the $K_0^\alpha$ as $\alpha$ ranges over all multi-indices is identical to the set of holomorphic polynomial functions on $R$.  Even more precisely, we have that, given a particular multi-index $\alpha$, the set of homogeneous holomorphic polynomials of degree $|\alpha|$ is identical to the linear span of $ \{K_0^\gamma \thinspace : \thinspace |\gamma|=|\alpha| \}$.  
  
  With these preliminaries in place, we can show the following:
  
  \begin{thm}
  \label{circ}
  If $R \subset \mathbb{C}^n$ is a bounded circular domain containing the origin and $P_N$, $HP_N$ are as in (\ref{Pdef}) and (\ref{Hdef}), and if $B$ is the Bergman Projection on $R$, then $B(P_N)=HP_N$ for each non-negative integer $N$.
  \end{thm}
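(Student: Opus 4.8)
The plan is to follow the broad strategy of Theorem \ref{main}, but to replace the Fischer-map argument with an explicit computation of the Taylor coefficients of $BP$ at the origin through the reproducing functions $K_0^\alpha$. The inclusion $HP_N \subset B(P_N)$ is immediate: since $R$ is bounded we have $HP_N \subset P_N \subset L^2(R)$, and every $h \in HP_N$ is holomorphic, so $Bh = h$. The work lies in the reverse inclusion $B(P_N) \subset HP_N$.

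Fix $P \in P_N$. Because $B$ is the orthogonal projection onto $H^2(R)$ and each $K_0^\alpha$ lies in $H^2(R)$, one has $\langle BP, K_0^\alpha \rangle = \langle P, K_0^\alpha \rangle$; combining this with the reproducing identity (\ref{diffrepro}), applied to the holomorphic function $BP$, yields
\begin{equation}
\frac{\partial^\alpha (BP)}{\partial z^\alpha}(0) = \langle BP, K_0^\alpha \rangle = \langle P, K_0^\alpha \rangle.
\end{equation}
Thus the Taylor coefficients of $BP$ at the origin are, up to the factors $1/\alpha!$, precisely the numbers $\langle P, K_0^\alpha \rangle$, and the whole problem reduces to showing that $\langle P, K_0^\alpha \rangle = 0$ whenever $|\alpha| > N$.

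To obtain this vanishing I would use two ingredients. First, rewriting $P$ in the variables $z, \bar{z}$ presents it as a polynomial $\sum_{|\mu|+|\nu| \le N} a_{\mu\nu} z^\mu \bar{z}^\nu$ of total degree at most $N$. Second, by the description of the $K_0^\gamma$ recalled just before the theorem, $K_0^\alpha$ is a homogeneous holomorphic polynomial of degree $|\alpha|$, so $\overline{K_0^\alpha} = \sum_{|\gamma|=|\alpha|} \overline{b_\gamma}\, \bar{z}^\gamma$. Expanding $\langle P, K_0^\alpha \rangle$ then produces a sum of integrals $\int_R z^\mu \bar{z}^{\nu+\gamma}\, dV$. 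The crucial step is a graded orthogonality coming from circular symmetry: since $R$ and Lebesgue measure are both invariant under $z \mapsto e^{i\theta}z$, the substitution $w = e^{i\theta}z$ gives $\int_R z^\mu \bar{z}^\delta\, dV = e^{i\theta(|\delta|-|\mu|)} \int_R z^\mu \bar{z}^\delta\, dV$ for every real $\theta$, forcing the integral to vanish unless $|\mu| = |\delta|$. Applying this with $\delta = \nu + \gamma$, a surviving term requires $|\mu| = |\nu| + |\alpha|$, whence $|\alpha| = |\mu| - |\nu| \le |\mu| + |\nu| \le N$. Therefore $\langle P, K_0^\alpha \rangle = 0$ as soon as $|\alpha| > N$.

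With every Taylor coefficient of $BP$ of order exceeding $N$ equal to zero, $BP$ coincides near the origin with its degree-$N$ Taylor polynomial, and since $R$ is connected the identity theorem propagates this agreement to all of $R$; hence $BP \in HP_N$, which finishes the inclusion. I expect the main obstacle to be conceptual rather than computational: the substantive content is concentrated in the homogeneity of $K_0^\alpha$ and the circular-invariance orthogonality $\int_R z^\mu \bar{z}^\delta\, dV = 0$ for $|\mu| \ne |\delta|$, after which the degree bound $|\alpha| \le N$ is purely mechanical. The one point deserving a little care is the passage from vanishing Taylor coefficients at a single point to the global polynomiality of $BP$, which is exactly where connectedness of the domain is invoked.
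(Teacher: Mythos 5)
Your proof is correct, and while it uses the same basic objects as the paper --- the kernel functions $K_0^\alpha$, their reproducing property (\ref{diffrepro}), and Bell's structure theorem for circular domains --- the mechanism is genuinely different. The paper argues by duality on monomials: by linearity it suffices to treat $z^\alpha\bar z^\beta$, and for arbitrary $f \in H^2(R)$ it writes $\langle f, B(z^\alpha\bar z^\beta)\rangle = \langle fz^\beta, z^\alpha\rangle$, expands $z^\alpha$ as a combination of the $K_0^\gamma$ with $|\gamma|=|\alpha|$, applies the reproducing property to the holomorphic function $fz^\beta$ and the Leibniz rule, and re-assembles the result as $\langle f, H\rangle$ with $H=\sum_{|\gamma|\le|\alpha|}d_\gamma K_0^\gamma$ an explicit holomorphic polynomial of degree at most $|\alpha|$; since both $H$ and $B(z^\alpha\bar z^\beta)$ lie in $H^2(R)$, they must be equal. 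You instead compute the Taylor coefficients of $BP$ at the origin via $\frac{\partial^\alpha (BP)}{\partial z^\alpha}(0)=\langle P, K_0^\alpha\rangle$ and kill the coefficients of order exceeding $N$ by combining the homogeneity of $K_0^\alpha$ with the rotation-invariance orthogonality $\int_R z^\mu\bar z^\delta\,dV=0$ for $|\mu|\ne|\delta|$ --- an ingredient the paper never invokes explicitly, and which makes the role of circular symmetry transparent. Note that you use only one half of Bell's span identity (each $K_0^\gamma$ is a homogeneous holomorphic polynomial of degree $|\gamma|$), whereas the paper needs both directions (monomials are combinations of the $K_0^\gamma$, and combinations of the $K_0^\gamma$ are polynomials); on the other hand, your route requires the extra Taylor-series and identity-theorem step to pass from vanishing derivatives at one point to global polynomiality, and it identifies $BP$ only implicitly, while the paper's duality argument produces an explicit formula for the projection of each monomial and concludes directly from weak equality in $H^2(R)$. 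Both arguments handle arbitrary $P \in P_N$ (yours without reduction to monomials), both give the same degree bound, and the easy inclusion $HP_N \subset B(P_N)$ is treated identically.
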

  \begin{proof}
  As in the proof of Theorem \ref{main}, it is easy to see that $HP_N \subset P_N$.
  
  For the reverse inclusion, by linearity it suffices to prove that for each pair of multi-indices $\alpha$, $\beta$ such that $|\alpha|+|\beta|=N$, $B(z^{\alpha}\bar{z}^\beta) \in HP_N$.  By the comments preceding the statement of the current theorem, we may calculate as follows:
  \[\begin{split}
  \langle f, B(z^\alpha \bar{z}^\beta) \rangle = \langle f, z^\alpha \bar{z}^\beta \rangle = \langle fz^\beta, z^\alpha \rangle = \\ \langle f z^\beta, \sum_{|\gamma|=|\alpha|}^{}c_\gamma K_0^\gamma \rangle = \sum_{|\gamma|=|\alpha|}^{}c_\gamma \frac{\partial ^\gamma (fz^\beta)}{\partial z^\gamma}|_{z=0}
  ,
  \end{split}\]
 where the $c_\gamma$ are constants depending only on $\alpha$. The sum on the far right can be simplified by the product rule, so that we get for some constants $d_\gamma$ which depend only on $\alpha$ and $\beta$,
 \begin{equation}
 \label{monprod}
 \langle f, B(z^\alpha \bar{z}^\beta) \rangle = \sum_{|\gamma| \leq |\alpha|}^{} d_\gamma \frac{\partial ^\gamma f}{\partial z^\gamma}|_{z=0}.
 \end{equation}
 Again by comments above, the sum on the right is equal to the inner product \[ \langle f, \sum_{|\gamma| \leq |\alpha|} d_\gamma K_0^\gamma \rangle, \]
 and the right member of this inner product is a holomorphic polynomial $H$ of degree at most $|\alpha| \leq N$. 
 
 Thus, we have found a polynomial $H \in HP_N$ such that $ \langle f, B(z^\alpha \bar{z}^\beta) \rangle = \langle f, H \rangle $ for every $f \in H^2(R)$; and since $H$ and $B(z^\alpha \bar{z}^\beta)$ are themselves in $H^2(R)$, it follows that $B(z^\alpha \bar{z}^\beta)=H$.
  \end{proof}
 
 Note that the only domains satisfying the hypotheses of Theorem \ref{circ} when $n=1$ are discs centered at the origin.  When $n>1$, Theorem \ref{circ} includes the case of `complex ellipsoids,' which have a defining polynomial as in (\ref{quadric}) of the form $r=-1 + \sum_{j=1}^{n}a_j |z_j|^2$, the $a_j, \thinspace j=1,2, \dots, n$ being positive real numbers.  For other ellipsoids, we must appeal to Theorem \ref{main}.
 
 \subsection{Images under certain biholomorphisms}  
 
 Using the transformation formula for the Bergman projection under biholomorphic mappings, we can show that under suitable biholomorphisms, the property that the Bergman projection maps polynomials to polynomials is preserved.  However, in this case we must relax the degree-preserving requirement that $B(P_N)=HP_N$.  Although we will not investigate the possible effects of this relaxation here, it is interesting to note that it does have meaningful consequences in the plane \cite{CS}.
 
 Let $\Omega$ and $V$ be domains in $\mathbb{C}^n$ and $f:\thinspace \Omega \rightarrow V$  a biholomorphism between them, and let $u=det(f')$ be the complex Jacobian determinant of $f$. Then, if $g \in L^2(V)$, it follows that $u \cdot g \circ f \in L^2(\Omega)$ and 
 \begin{equation}
 \label{bergtrans}
 B_\Omega (u \cdot g \circ f)=u \cdot (B_Vg)\circ f,
 \end{equation}
 where $B_\Omega$ and $B_V$ are the Bergman projections on $\Omega$ and $V,$ respectively. (cf. Ch. 3 Sec. 2 of \cite{Bergman})
 
  Using the notation of (\ref{Pdef}), let $P=\cup_{N \geq 0}P_N$ be the set of all polynomial functions on $\mathbb{C}^n$, and let $HP$ be its subset consisting of all holomorphic polynomial functions.  In what follows, we shall view the elements of $P$ and $HP$ as functions either  on $\Omega$ or on $V$, but no confusion on this point will arise in this context.  In addition, by a `polynomial biholomorphic mapping' we will mean a biholomorphic mapping whose component functions are polynomials. A polynomial biholomorphic mapping `with polynomial inverse' will be a polynomial biholomorphic mapping whose inverse mapping is also a polynomial biholomorphic mapping.

 \begin{thm}
 \label{bihol}
 Let $\Omega,\thinspace V \subset \mathbb{C}^n$ be domains, and let $B_\Omega$ and $B_V$ denote the Bergman projections of $\Omega$ and $V$, respectively.  Assume that $B_\Omega$ is such that $B_\Omega(P)=HP.$  Then, if there exists a polynomial biholomorphic mapping $f: \Omega \rightarrow V$ with polynomial inverse, it follows that $B_V (P) = HP$. 
  \end{thm}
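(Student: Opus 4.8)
The plan is to establish the two inclusions $HP \subset B_V(P)$ and $B_V(P) \subset HP$ separately, with essentially all of the work concentrated in the second. The first is immediate: every $h \in HP$ is a holomorphic element of $L^2(V)$ (as are all the polynomials under consideration here), and is therefore fixed by the orthogonal projection $B_V$, so $h = B_V h \in B_V(P)$.

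For the reverse inclusion, the idea is to transport an arbitrary polynomial on $V$ back to $\Omega$ via $f$, apply the hypothesis there, and then transport the answer forward again with $f^{-1}$, invoking the transformation formula (\ref{bergtrans}) at each stage. Concretely, I would fix $g \in P$ and set $u = \det(f')$; since $f$ is a polynomial map, $u$ is a polynomial, and $g \circ f$ is a polynomial, so $u \cdot (g \circ f) \in P$. By the hypothesis $B_\Omega(P) = HP$, the function $h_0 := B_\Omega(u \cdot g \circ f)$ is a holomorphic polynomial. Formula (\ref{bergtrans}) then reads $u \cdot (B_V g) \circ f = h_0$, so that $(B_V g) \circ f = h_0/u$ as holomorphic functions on $\Omega$; note that $u$ is nonvanishing on $\Omega$, being the Jacobian determinant of a biholomorphism.

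To recover $B_V g$ itself I would compose with the inverse $F := f^{-1}$, which is a polynomial map by hypothesis. Writing $v = \det(F')$, the chain rule applied to $f \circ F = \mathrm{id}$ gives $\det\bigl(f'(F(w))\bigr)\,\det\bigl(F'(w)\bigr) = 1$, that is $(u \circ F)\, v = 1$, hence $u \circ F = 1/v$. Evaluating the relation $(B_V g) \circ f = h_0/u$ at the point $F(w)$ for $w \in V$ and using $f(F(w)) = w$ then yields
\begin{equation*}
B_V g = v \cdot (h_0 \circ F).
\end{equation*}
Here $v = \det(F')$ is a holomorphic polynomial because $F$ is a polynomial map, and $h_0 \circ F$ is a holomorphic polynomial because $h_0 \in HP$ and $F$ is a polynomial map. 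Thus $B_V g$ is a product of holomorphic polynomials and so lies in $HP$, completing the inclusion $B_V(P) \subset HP$.

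The main obstacle, and the only place the structure of $f$ is genuinely used, is the passage from the rational-looking expression $h_0/u$ back to an honest polynomial. A priori, dividing the holomorphic polynomial $h_0$ by the Jacobian $u$ produces merely a holomorphic function on $\Omega$, with no reason to be polynomial; what rescues the argument is that pulling back by $F$ converts division by $u$ into multiplication by $v = \det(F')$, by way of the Jacobian identity $u \circ F = 1/v$. This is exactly where both halves of the hypothesis ``polynomial biholomorphism with polynomial inverse'' are indispensable: one needs $F$ polynomial for $h_0 \circ F$ to be polynomial, and one needs $\det(F')$ polynomial (again a consequence of $F$ being polynomial) in order to clear the denominator. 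I would also remark, as the statement itself signals, that degree control is necessarily sacrificed here, since both multiplication by $v$ and composition with $F$ can raise degree, which is why the conclusion is only $B_V(P) = HP$ rather than a degree-by-degree identity.
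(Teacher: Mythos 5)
Your proposal is correct and follows essentially the same route as the paper: apply the transformation formula (\ref{bergtrans}) to $u \cdot (g \circ f)$, use the hypothesis on $B_\Omega$, and then undo the transport by composing with $F = f^{-1}$ and using the Jacobian identity $(u \circ F)\det(F') \equiv 1$ to turn division by $u$ into multiplication by the polynomial $\det(F')$, yielding $B_V g = \det(F') \cdot \bigl(B_\Omega(u \cdot g \circ f) \circ F\bigr) \in HP$. This is precisely the paper's argument, including the observation that both the polynomiality of $F$ and of its Jacobian are what make the final expression polynomial.
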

  \begin{proof}
  Let $p\in P$ be any polynomial function on V.  We will show that $B_Vp \in HP$.  As usual, $HP \subset B_V(P)$ is clear.
  
  Using the transformation formula (\ref{bergtrans}) for $f$ as in the statement of the theorem, we have
  \begin{equation}
  \label{trans}
   B_\Omega (u \cdot p \circ f)=u \cdot (B_Vp)\circ f, 
   \end{equation}
  where $u$ is the complex Jacobian determinant of $f$. Let $F=f^{-1}:\thinspace V \rightarrow \Omega$ be the inverse mapping to $f$, and let $U$ be the complex Jacobian determinant of $F$.  By hypothesis, each of $f,\thinspace F$ are polynomial mappings, and so $u$ is a polynomial function on $\Omega$ and $U$ is a polynomial function on $V$.  By the chain rule, since $f\circ F$ is the identity, we have 
  \begin{equation}
  \label{chain}
  (u\circ F)\cdot U \equiv 1
  \end{equation}
  as functions on $V$.
  
  Now rearrange (\ref{trans}) by dividing by $u$ and composing on the right with $F$ on each side.  Using (\ref{chain}), this yields that
  \begin{equation}
  \label{BV}
  B_Vp=U \cdot B_\Omega(u \cdot p \circ f)\circ F
  \end{equation}
  Now, since $u, \thinspace p, \thinspace f$ are all polynomial, the function $u \cdot p \circ f$ is a polynomial function on $\Omega$, so by hypothesis $B_\Omega(u \cdot p \circ f)$ is a polynomial function on $\Omega$.  Since $F$ and $U$ are polynomial functions on $V$, it follows immediately that $U \cdot B_\Omega(u\cdot p \circ f)\circ F$ is a polynomial function on $V$.  Hence $B_Vp \in HP$.
  \end{proof}
  
  We remark that in one dimension, the only biholomorphic polynomial mappings with polynomial inverse are of degree 1, but many other such mappings exist in dimensions greater than 1. We can use Theorem \ref{bihol} to find domains which are neither ellipsoids nor bounded circular domains on which $B(P)=HP$.
  
  As an explicit example, let $\Omega$ be the unit polydisc in $\mathbb{C}^2$, \[ \Omega = \{(z_1,z_2)\in \mathbb{C}^2 \thinspace : \thinspace |z_1|<1 \thinspace \text{and} \thinspace |z_2|<1 \},\]
  and let $f: \thinspace \mathbb{C}^2 \rightarrow \mathbb{C}^2$ be the polynomial mapping defined by 
  \[f(z_1,z_2)=(z_1+z_2^2,z_2).\]  It is easy to verify that $f$ is univalent on all of $\mathbb{C}^2$, with inverse $F$ given by \[F(\zeta_1,\zeta_2)=(\zeta_1-\zeta_2^2,\zeta_2).\] Define $V=f(\Omega)$, and apply Theorem \ref{bihol} with $f$ restricted to $\Omega$ and $F$ restricted to $V$ see that the projection $B_V$ is such that $B_V(P)=HP$.  The domain $V$ is neither a circular domain about any point of $\Omega$, nor an ellipsoid.  To see this is a matter of a few simple calculations.
  
  First, note that the points $(\frac{91}{100}, \frac{1}{10})$ and $(\frac{171}{100},\frac{9}{10})$ are both members of $V$ (being the images under $f$ of the points $(\frac{9}{10}, \frac{1}{10})$ and $(\frac{9}{10},\frac{9}{10})$, respectively). Their midpoint is $M=(\frac{131}{100},\frac{1}{2}),$ which is not a member of $V$, since $F(M)=(\frac{53}{50},\frac{1}{2})$, which lies outside $\Omega$.  Hence $V$ is not convex, and therefore cannot be an ellipsoid.
  
  Second, consider again the point $(\frac{171}{100},\frac{9}{10})$ of $V$.  If $V$ were a circular domain about the origin, then the point $(-\frac{171}{100},-\frac{9}{10})$ would also be a member of $V$, but applying $F$ to this point we find $F((-\frac{171}{100},-\frac{9}{10}))=(-\frac{63}{25}, -\frac{9}{10})$, which is not an element of $\Omega,$ so $(-\frac{171}{100},-\frac{9}{10})$ is not a member of $V$, and $V$ fails to be circular about the origin. 
  
   If $V$ were circular about the point $a\in V$, then by \cite{Bell2} the Bergman kernel at $a$, $K_V(\zeta,\thinspace a)$ would be constant in $\zeta \in V$.  Since the Jacobian determinant of $F$ is the function $1$, the transformation formula for the Bergman kernel under biholomorphisms yields that $K_\Omega(z,\thinspace F(a))$ is constant in $z \in \Omega$.  However, since $\Omega$ is itself circular about the origin, \cite{Bell2} gives that $K_\Omega(z,\thinspace 0)$, the Bergman kernel at the origin, is also constant.  Hence, by the reproducing property of the Bergman kernel, there exists a complex constant $\lambda$ such that $h(0)=\lambda h(F(a))$ for all $h \in H^2(\Omega)$.  Now, this is only possible if $\lambda=1$ and $F(a)=0$.  Applying $f$, we have that $a=f(0)=0$.  Thus $V$ is circular about the origin, but this possibility was excluded in the previous paragraph. 
   
   \section{A hierarchy of Khavinson-Shapiro Conjectures}
   
   As a final consideration, we can employ another variation of the linear algebra proof from Section $3$ to show that on ellipsoids in $\mathbb{R}^n$, the orthogonal projection from $L^2$ real-valued functions onto its subspace of polyharmonic functions of order $m$, which we call the `Bergman Projection onto polyharmonic functions of order $m$', takes real polynomials to real polynomials without increasing degree. (The projection is defined since the space of polyharmonic functions of order $m$ is closed in $L^2$, for example by hypoellipticity of the operator $\Delta ^m$).  Recall that, for positive integers $m$, the polyharmonic functions of order $m$ on a domain are those functions $f$ such that $\Delta ^m f=0$. We mention that the Khavinson-Shapiro conjecture for the polyharmonic Dirichlet problem has recently been established for a particular class of domains in \cite{R} (cf Sec. 10, Thm 31).
   
    Given an ellipsoid $\mathcal{E}\subset \mathbb{R}^n$, let the Bergman projection onto polyharmonic functions of order $m$ be denoted by $B^{(m)}$, let $\mathcal{H}^{(m)}_N$ denote the space of polynomials which are polyharmonic of order $m$ and of total degree at most $N$. Let $P_N$ be the space of real polynomials of degree at most N, and let $r$ be a degree-2 defining polynomial for $\mathcal{E}$.
  
  Mimicking the proof of Theorem \ref{main}, let $\varphi$ be the linear map from $P_N$ to itself defined by
  \[ \varphi (p)=\Delta ^m r^{2m} \Delta^m p. \]
  Now, since $\varphi$ clearly vanishes on $\mathcal{H}^{(m)}_N$, it descends to a map $\tilde{\varphi}$ from $P_N/ \mathcal{H}^{(m)}_N$ to itself, where $P_N$ is the space of all polynomials of degree at most $N$. Considering $P_N / \mathcal{H}^{(m)}_N$ as a real vector space, $\tilde{\varphi}$ is linear, and can be shown to be injective.  Indeed, suppose that $\tilde{\varphi}(p)=[0]$.  Then ${\varphi}(p) \in \mathcal{H}^{(m)}_N$.  But $\varphi (p)$ is also orthogonal to $\mathcal{H}^{(m)}_N$ by integration by parts. To see this, let $q \in \mathcal{H}^{(m)}_N$, and,  using the usual inner product on real-valued functions, notice that
  
  \[ \int_{\mathcal{E}}{\Delta^m r^{2m} \Delta^m p \cdot q } = \int_{\mathcal{E}}{r^{2m} \Delta ^m p \cdot \Delta^m q} = \int_{\mathcal{E}}{r^{2m} \Delta ^m p \cdot 0}=0. \]
  
  The use of the self-adjointness of $\Delta$ has been employed $m$ times, and this is justified since $r^{2m}$ along with all of its partial derivatives of order up to $2m-1$ vanish on the boundary of $\mathcal{E}$.
  
  Hence ${\varphi}(p)=0.$  Write this as
   \[ \Delta \Delta^{m-1} r^{2m} \Delta ^m p=0, \] and notice that by counting derivatives, 
   $ \Delta^{m-1} r^{2m} \Delta ^m p$
      vanishes on the boundary of $\mathcal{E}$. Hence by the maximum principle for harmonic functions,
       \[ \Delta^{m-1} r^{2m} \Delta ^m p =0 .\]
         Repeating this argument $m-1$ more times, we see that $r^{2m} \Delta^m p =0$, and since $r$ is nonvanishing on $\mathcal{E}$, we have $\Delta ^m p = 0$, so $[p]=0$.

So $\tilde{\varphi}$ is injective, and it is also surjective since it is a linear map between vector spaces of equal finite dimension.  Given a polynomial $P \in P_N$, let $Q \in P_N$ be such that $[P]=\tilde{\varphi}([Q]).$  There exists $h \in \mathcal{H}^{(m)}_N$ such that 
$P=\varphi(Q) + h.$  But by the integration by parts argument above applied to $Q$, $\varphi(Q)$ is orthogonal to $\mathcal{H}^{(m)}_N$.  So we in fact have found an orthogonal decomposition, and $B^{(m)}P =h$ is a polynomial of degree at most $N$. We have proved the following theorem.

\begin{thm}
\label{Harmonics}
Let $\mathcal{E} \subset \mathbb{R}^n$ be an ellipsoid. Given any positive integer $m$, let $B^{(m)}$ be the Bergman projection from $L^2(\mathcal{E})$ onto polyharmonic functions of order $m$. Then, for each positive integer $N$, $B^{(m)}(P_N)= \mathcal{H}^{(m)}_N$.
\end{thm}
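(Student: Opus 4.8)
The plan is to mimic, almost verbatim, the Fischer-map argument used to prove Theorem \ref{main}, with the self-adjointness of the Laplacian playing the role that the adjointness of $\vartheta$ and $\overline{\partial}$ played there. As in that proof, the inclusion $\mathcal{H}^{(m)}_N \subset B^{(m)}(P_N)$ is immediate: the elements of $\mathcal{H}^{(m)}_N$ are polynomials lying in the range of $B^{(m)}$ and hence are fixed by it, so $\mathcal{H}^{(m)}_N = B^{(m)}(\mathcal{H}^{(m)}_N) \subset B^{(m)}(P_N)$. All the content is in the reverse inclusion $B^{(m)}(P_N) \subset \mathcal{H}^{(m)}_N$.

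First I would regard $P_N$ and its subspace $\mathcal{H}^{(m)}_N$ as finite-dimensional real vector spaces and pass to the quotient $P_N / \mathcal{H}^{(m)}_N$. On $P_N$ I define the real-linear map $\varphi(p) = \Delta^m(r^{2m} \Delta^m p)$, where $r$ is a degree-2 defining polynomial for $\mathcal{E}$. A degree count shows $\varphi$ carries $P_N$ into itself: the inner $\Delta^m$ lowers degree by $2m$, multiplication by $r^{2m}$ raises it by $4m$, and the outer $\Delta^m$ lowers it by $2m$ again, for a net change of $0$. Since $\Delta^m p = 0$ whenever $p \in \mathcal{H}^{(m)}_N$, the map $\varphi$ annihilates $\mathcal{H}^{(m)}_N$ and therefore descends to a linear endomorphism $\tilde{\varphi}$ of $P_N / \mathcal{H}^{(m)}_N$.

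The crux is to prove that $\tilde{\varphi}$ is injective, after which surjectivity follows for free from equality of dimensions. Suppose $\tilde{\varphi}([p]) = [0]$, so that $\varphi(p) \in \mathcal{H}^{(m)}_N$. I would then show that $\varphi(p)$ is also orthogonal to $\mathcal{H}^{(m)}_N$: for $q \in \mathcal{H}^{(m)}_N$, integration by parts moves the $m$ copies of $\Delta$ off $r^{2m}\Delta^m p$ and onto $q$, yielding $\int_{\mathcal{E}} r^{2m}\, \Delta^m p\, \Delta^m q$, which vanishes because $\Delta^m q = 0$. The step I expect to demand the most care is verifying that every boundary term generated by these integrations by parts vanishes; this holds because $r$ vanishes to first order on $\partial\mathcal{E}$, so $r^{2m}$ together with all of its partial derivatives of order up to $2m-1$ vanishes there, which is exactly enough to apply the self-adjointness of $\Delta$ the required $m$ times. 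Being simultaneously an element of $\mathcal{H}^{(m)}_N$ and orthogonal to it forces $\varphi(p) = 0$.

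Finally, from $\Delta^m(r^{2m}\Delta^m p) = 0$ I would strip off the Laplacians one at a time using the maximum principle. Writing the equation as $\Delta(\Delta^{m-1} r^{2m}\Delta^m p) = 0$, the inner function $\Delta^{m-1} r^{2m}\Delta^m p$ is harmonic, and by a derivative count against the order-$2m$ vanishing of $r^{2m}$ it vanishes on $\partial\mathcal{E}$; the maximum principle then gives $\Delta^{m-1} r^{2m}\Delta^m p = 0$. Iterating $m-1$ further times yields $r^{2m}\Delta^m p = 0$, and since $r$ is nonvanishing on $\mathcal{E}$ this forces $\Delta^m p = 0$, i.e. $[p] = [0]$, establishing injectivity. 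With $\tilde{\varphi}$ now bijective, any $P \in P_N$ can be written $P = \varphi(Q) + h$ for some $Q \in P_N$ and $h \in \mathcal{H}^{(m)}_N$; since the integration-by-parts computation shows $\varphi(Q) \perp \mathcal{H}^{(m)}_N$, this is an orthogonal decomposition, whence $B^{(m)}(P) = h \in \mathcal{H}^{(m)}_N$, completing the proof.
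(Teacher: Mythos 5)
Your proposal is correct and follows essentially the same route as the paper: the same Fischer-type map $\varphi(p)=\Delta^m(r^{2m}\Delta^m p)$ on $P_N/\mathcal{H}^{(m)}_N$, injectivity via the integration-by-parts orthogonality (justified by the vanishing of $r^{2m}$ and its derivatives of order up to $2m-1$ on $\partial\mathcal{E}$) followed by the maximum-principle stripping of Laplacians, and then surjectivity by dimension count to produce the orthogonal decomposition $P=\varphi(Q)+h$. No gaps to report.
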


In essence, this means we have conceived a whole hierarchy of Khavinson-Shapiro conjectures, one for each of the possible values of $m$.
 We remark that we can explicitly relate Theorem \ref{Harmonics} to potential theory by noticing that for smooth bounded domains, $B^{(m)}=I- \Delta ^m G^{(2m)} \Delta ^m$ for each positive integer $m$.  Here $I$ is the identity operator and $G^{(2m)}$ is the solution operator for the polyharmonic Dirichlet problem $\Delta ^{2m} \varphi = v$, $\varphi=D_j \varphi =0$ on $bd\mathcal{E}$, where $D_j$ stands for every partial derivative of order at most $m-1$.  For an ellipsoid $\mathcal{E}$, whenever $p$ is a polynomial of degree $N$, it follows that $\Delta ^m G^{(2m)} \Delta ^m p$ is a polynomial of degree at most $N$, and we have a formulation in terms of solutions to a PDE, in comparison with the original formulation of the Khavinson-Shapiro conjecture.

 {\bf Acknowledgements.} In closing, the author would like to thank Steve Bell and Erik Lundberg for insightful discussions regarding the content of this work.


\begin{thebibliography} {000}
  \bibitem {Baker}
  Baker, John A.
  {\em The Dirichlet Problem for Ellipsoids},
  The American Mathematical Monthly, 
  {\bf Vol 106(9)} (Nov 1999), 829--834
  
  \bibitem {Bergman}
  Bergman, Stefan
  {\em The Kernel Function and Conformal Mapping},
  Mathematical Surveys Number V,
  American Mathematical Society, Providence, RI, 1970
  
  \bibitem {Bell2}
  Bell, Steven R,
  {\em Proper Holomorphic Mappings Between Circular Domains},
  Comentarii Mathematici Helvetici,
  {\bf Vol 57(1)} (1982) 532--538
  
  \bibitem {Bell1}
  Bell, Steven R,
  {\em Unique Continuation Theorems for the $\overline{\partial}$-Operator and Applications},
  The Journal of Geometric Analysis,
   {\bf Vol.3(3)}, (1993) 195--224
   
   \bibitem{CS}
   Chamberland, Marc, and Siegel, David,
   {\em Polynomial Solutions to Dirichlet Problems},
   Proc. Amer. Math. Soc.,
   {\bf Vol 129(1)} (2001)  211--217
   
  \bibitem{FK}
  Folland, G.B. and Kohn, J.J.,
  {\em The Neumann Problem for the Cauchy Riemann Complex},
   Annals of Mathematics Studies, 
   Princeton Univ. Press, Princeton, NJ 1972
   
   \bibitem{KL1}
   Khavinson, Dmitry and Lundberg, Erik,
   {\em The Search for Singularities of Solutions to the Dirichlet Problem: Recent Developments}
   CRM Proceedings and Lecture Notes,
   {\bf Vol 51}	 (2010)	121--132
   
   \bibitem{KL}
  Khavinson, Dmitry and Lundberg, Erik, 
  {\em A Tale of Ellipsoids in Potential Theory},
  Notices Amer. Math. Soc., 
  {\bf Vol 61(2)} (Feb 2014),  148--156
  
  \bibitem{KS}
  Khavinson, Dmitry, and Shapiro, Harold S.,
  {\em Dirichlet's Problem When the Data is an Entire Function},
  Bull. London Math. Soc.,
  {\bf Vol 24(5)} (1992),  456--468
  
  \bibitem{LR}
  Lundberg, Erik, and Render, Hermann,
  {\em The Khavinson-Shapiro Conjecture and Polynomial Decompositions},
  J. Math. Anal. Appl.,
  {\bf Vol 376}		(2011)	506--513
  
  \bibitem {Re}
  Render, Hermann,
  {\em Cauchy, Goursat,and Dirichlet Problems for Holomorphic Partial Differential Equations},
  Computational Methods and Function Theory,
  {\bf Vol 10}	(2010)	519--554
  
  \bibitem{R}
  Render, Hermann,
  {\em Real Bargmann Spaces, Fischer Decompositions, and Sets of Uniqueness for Polyharmonic Functions},
  Duke Math. J.,
  {\bf Vol 142 (2)}	(2008),	315--352
  
  \bibitem{Sh1}
  Shapiro, Harold S.,
  {\em An Algebraic Theorem of E. Fischer, and the Holomorphic Goursat Problem},
  Bull. London Math. Soc.,
  {\bf Vol 21(6)}	(1989)	513--537
  
  \bibitem{Sh}
  Shapiro, Harold S.:
  {\em The Schwarz Function and its Generalization to Higher Dimensions},
  Wiley-Interscience, New York, 1992
  

\end{thebibliography}
\end{document}